\newtheorem{theo}{Theorem}
\newtheorem{lemme}{Lemma}
\newtheorem{cor}{Corollary}
\newtheorem{prop}{Proposition}
\newtheorem{defi}{Definition}
\begin{document}
	
\begin{frontmatter}

\title{On an optimal constraint aggregation method for integer programming and on an expression for the number of integer points in a polytope}
 
\author[ens]{Pierre-Louis Poirion \corref{cor1}}
\ead{pierre-louis.poirion@ensta-paristech.fr}

\author[lix]{Vu Khac Ky}

\author[lix]{Leo Liberti}

\cortext[cor1]{Corresponding author}

\address[ens]{ENSTA ParisTech,  828, Boulevard des Maréchaux F-91762 Palaiseau Cedex (and CEDRIC-CNAM)}

\address[lix]{ CNRS LIX (UMR7161), \'Ecole Polytechnique, 91128 Palaiseau, France}

\begin{abstract}
In this paper we give a new aggregation method for linear Diophantine systems. In particular, we prove that an aggregated system of minimum size can be constructed in polynomial time. We also derive an analytic formula that gives the number of solutions of the system when it is possible to aggregate the system into one equation.
\end{abstract}
\begin{keyword}
integer programming \sep discrete geometry \sep knapsack problem \sep combinatorics
\end{keyword}

\end{frontmatter}

\section{Introduction}
Let us consider a linear Diophantine system:
\begin{equation*}
(F) = \left\{
\begin{array}{lll}
& Ax=b\\
& x \in \mathbb{Z}^n_+
\end{array}
\right. 
\end{equation*}
where $A$ is an $m\times n$ integer matrix and $b\in \mathbb{Z}^m$. Such a system is defined by the intersection of set of equations and the non-negative orthant. An \textit{aggregation} of the system above is a linear Diophantine system having equations that are all linear combinations of the equations in $(F)$. Hence such an aggregated system can be written as: 
\begin{equation*}
(F_T) = \left\{
\begin{array}{lll}
& TAx=Tb\\
& x \in \mathbb{Z}^n_+
\end{array}
\right. 
\end{equation*}
where $T \in \mathbb{Q}^{k \times m}$ is an \textit{aggregation matrix} and where $k \in \mathbb{N}$ defines the \textit{size} of the aggregated system. Obviously, for any $T$, $(F_T)$ is a relaxation of $(F)$, hence every solution of $(F)$ is a solution of $(F_T)$. When both systems have the same set of solutions, we say that $T$ is a \textit{strong} aggregation matrix and that $(F_T)$ is a strong aggregated system. For example,
\begin{equation*}
(\tilde{F}_{(1 \ 2)}) = \left\{
\begin{array}{lll}
& 5x + 4y =9\\
& (x,y) \in \mathbb{Z}^2_+
\end{array}
\right. 
\end{equation*}
is a strong aggregated system for the system
\begin{equation*}
(\tilde{F}) = \left\{
\begin{array}{lll}
& x + 2y =3\\
& 2x +y =3 \\
& (x,y) \in \mathbb{Z}^2_+
\end{array}
\right. 
\end{equation*}
as, in both case, the set of solutions is equal to $\{(1,1)\}$, and we have that \\ $\begin{pmatrix}
1 & 2
\end{pmatrix}\begin{pmatrix}
1 & 2 \\
2 & 1
\end{pmatrix}= \begin{pmatrix}
5 & 4
\end{pmatrix}$. However if we take the aggregation matrix, $T=(1\ 1)$, for $(\tilde{F})$, we obtain the aggregated system:
\begin{equation*}
(\tilde{F}_{(1 \ 1)}) = \left\{
\begin{array}{lll}
& 3x + 3y =6\\
& (x,y) \in \mathbb{Z}^2_+
\end{array}
\right. 
\end{equation*}
which is not a strong aggregated system, as its set of solutions is equal to $\{(1,1),(2,0),(0,2)\}$.\\

 When the aggregated system has the property to have a solution if and only if the original one has a solution, we say that $T$ is a \textit{weak} aggregation matrix. We allow, in the weak aggregation case, the introduction of upper bounds for the variables of the aggregated system, $(F_T)$.
 For example, $(\tilde{F}_{(1 \ 1)})$ is a weak aggregated system, as both set of solutions are non-empty.\\
 
In \cite{bradley1971}, Bradley studies the case of bounded Diophantine systems, where $x$ is bounded above by a vector $u$. He proves that, for a bounded system, it is always possible to construct a strong aggregated system of size one, and hence that we can always transform a bounded integer linear program into an equality constrained knapsack problem. In order to obtain the aggregated problem, the constraints are aggregated two by two, one at a time. In \cite{kendall1977}, Kendall et al. present a refinement of Bradley's method, that reduce the size of the coefficients of the aggregation matrix. In \cite{rosenberg1974}, Rosenberg studies aggregation methods for general Diophantine systems (not necessarily linear) and gives sufficient conditions to construct a strong aggregated system of size $k$. In particular, he proves that, in the linear case, it is always possible to construct a strong aggregated system having a size equal to the dimension of the lineality subspace of the cone generated by $A$ (a precise definition will be given later) plus one. In \cite{kannan1983polynomial}, Kannan proves that when all the entries of the matrix $A$ are positive, it is possible to construct a strong aggregated matrix of size one in polynomial time. The aggregation of a system of inequalities is studied in \cite{chvatal1977}.

In this paper, we will study the minimum size of a strong aggregation matrix $T$ for $(F)$, and prove that such a matrix can be constructed in polynomial time. In particular, we will prove that the bound found by Rosenberg, in \cite{rosenberg1974}, for the size of an aggregated system, in the linear case, is optimal and that a corresponding aggregation matrix can be constructed in polynomial time. This also generalizes the result established by Kannan, in \cite{kannan1983polynomial}, as it easy to see than when all the entries of $A$ are positive, then the cone generated by $A$ is pointed. Our approach is geometric and comes from the following observation proved in \cite{vu2015gaussian}: given a hyperplane $H=\{y\in \mathbb{R}^m\,|\, Ty=0\}$ where the entries of $T$ are independent random variables having a continuous density function, then, almost surely, $T$ is a strong aggregation matrix for $(F)$. This is because, with probability one, $H$ will avoid all the non-zero integer points in $\mathbb{R}^m$. In this paper, we aim to build, in polynomial time, a rational subspace $H\subset \mathbb{R}^m$ that verifies this property. \\

In Section~\ref{sec:1}, we revisit the bounded case and give a new method to build a strong aggregation matrix of size one for the problem. In Section~\ref{sec:2} we study the general case and prove that a strong aggregation matrix of minimum size can be computed in polynomial time. Then, in Section~\ref{sec:weakaggreg} we briefly explain how to build a weak aggregated system of size one for any linear Diopantine system $(F)$. Finally, in Section~\ref{sec:counting}, we explain how to use strong aggregated system to derive an analytic formula that counts the number of solutions for $(F)$.

\section{Notation}
We introduce the following notations. 
For any matrix $\mathcal{M}$, we denote by $\mathcal{M}_i$ the $i$th column of $\mathcal{M}$. We denote by $\mathcal{C}(\mathcal{M})=\{\mathcal{M}x\, |\, x\ge 0\}$, the cone generated by the column of $\mathcal{M}$. Similarly, $\mathcal{L}(\mathcal{M})=\{\mathcal{M}x\, |\, x \mbox{ integer} \}$ denotes the integer lattice generated by $\mathcal{M}$. Furthermore $\mathcal{L}^+(\mathcal{M})=\{\mathcal{M}x\, |\, x \mbox{ non-negative integer} \}$ denotes the non-negative lattice generated by $\mathcal{M}$. \\
We denote by $\|\|_\infty$ the infinity norm; for any matrix $\mathcal{M}$, we denote by $\|\mathcal{M}\|_\infty=\max\limits_{1\le i \le m} \sum \limits_{j=1}^n|\mathcal{M}_{ij}|$ the induced infinity norm on $\mathcal{M}$.
For all $i\in \mathbb{N}$, we denote by $p_i$ the $i$th prime number.  The aggregation matrix will always be denoted by $T \in \mathbb{Q}^{k \times m}$, where $k$ is the size of the aggregated system.

\section{Bounded integer linear problem}\label{sec:1}
In this section, we consider the case of bounded integer linear feasibility problem (BILFP). We will prove that we can build, in polynomial time, a strong aggregation matrix $T$ of size one. Let us consider the following BIFLP, $(F)$, and its aggregated version, $(F_T)$:
\begin{equation*}
(F^B) = \left\{
\begin{array}{lll}
& Ax=b\\
& 0\leq x \leq u \\
& x \in \mathbb{Z}^n
\end{array}
\right. 
\end{equation*}

\begin{equation*}
(F^B_T) = \left\{
\begin{array}{lll}
& TAx=Tb\\
& 0\leq x \leq u \\
& x \in \mathbb{Z}^n
\end{array}
\right. 
\end{equation*}

\noindent Where $A\in \mathbb{Z}^{m \times n},b \in \mathbb{Z}^m,  u \in \mathbb{Z}^n, T\in \mathbb{Z}^{k \times m}$ and $\mbox{rank}(A)=m<n$. Let us denote by $\mathcal{L}(A)$ the lattice generated by $A$. We define the bounded lattice $L_1=\{Ax\ |\, x \in \mathbb{Z}^n,\, 0\leq x \leq u\}$. The feasibility problem $(F^B)$ is therefore equivalent to the set membership problem: ``decide whether $b \in L_1$ or not".\\

\begin{defi}
	Let $H$ be a hyperplane of $\mathbb{R}^m$ and let $S\subset \mathbb{Z}^m$. We say that $H$ avoids $S$ if $H\cap S \subseteq \{0\}$. 
\end{defi}

The following proposition gives an equivalence between the hyperplanes avoiding the set $S=-b+L_1$ and the strong aggregation matrices $T$ for $(F^B)$.
\begin{prop}\label{prop:avoids}
	Let $H=\{y\in\mathbb{R}^m\ |\ Ty=0 \}$, with $T$ a $1\times m$ matrix, be a hyperplane that avoids the set $S=-b+L_1$ then $T$ is a strong aggregation matrix for $(F^B)$.
\end{prop}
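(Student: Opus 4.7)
The plan is to unwind the definitions and show that the avoidance property of $H$ forces every feasible point of $(F^B_T)$ to be feasible for $(F^B)$. One direction is free: if $x$ solves $(F^B)$ then $Ax=b$ yields $TAx=Tb$, so $x$ solves $(F^B_T)$. Therefore the real content is the converse inclusion.

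To establish the converse, I would start from an arbitrary integer vector $x$ with $0 \leq x \leq u$ satisfying $TAx = Tb$, and try to locate the residual vector $Ax - b$ in both $S$ and $H$ simultaneously. On the one hand, since $x$ is a non-negative integer vector bounded by $u$, the definition of $L_1$ gives $Ax \in L_1$, hence $Ax - b \in -b + L_1 = S$. On the other hand, $T(Ax - b) = TAx - Tb = 0$ shows that $Ax - b \in H$.

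Combining these two memberships gives $Ax - b \in H \cap S$. By hypothesis $H$ avoids $S$, which by definition means $H \cap S \subseteq \{0\}$, and therefore $Ax - b = 0$. Thus $x$ solves $(F^B)$, proving that the feasible sets of $(F^B)$ and $(F^B_T)$ coincide, i.e., $T$ is a strong aggregation matrix.

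I do not expect any serious obstacle here: the proposition is essentially a translation between the geometric language of avoiding hyperplanes and the algebraic language of aggregation matrices. The only subtle point is to make sure that the residual $Ax - b$ lies in the \emph{integer} set $S$ rather than merely in its real affine hull, which is why the bounds $0 \leq x \leq u$ and the integrality of $x$ are used in the argument via the definition of $L_1$.
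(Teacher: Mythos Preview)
Your proof is correct and matches the paper's own argument essentially line for line: both directions are handled the same way, with the converse established by showing that the residual $Ax-b$ lies in $H\cap S$ and then invoking the avoidance hypothesis.
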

\begin{proof}
	Let $x$ be a solution of $(F^B)$, then obviously it is also a solution of $(F^B_T)$.\\
	Now let $x$ be a solution of $(F^B_T)$. By definition, $T(Ax-b)=0$, hence $y =Ax-b \in H$. Furthermore, $y$ belongs also to $S=-b+L_1$, therefore $y \in H \cap S$.
	Since $H$ avoids $S$, we deduce that $y=0$, i.e. $Ax=b$.
\end{proof}
 
\begin{cor}\label{cor:avoids}
		Let $H=\{y\in\mathbb{R}^m\ |\ Ty=0 \}$, with $T$ an $1\times m$ matrix, be a hyperplane that avoids the set $S'\supseteq -b+L_1$; then $T$ is a strong aggregation matrix for $(F^B)$.
\end{cor}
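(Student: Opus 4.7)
The plan is to deduce this immediately from Proposition~\ref{prop:avoids} by a monotonicity argument on the ``avoids'' relation. Specifically, I would observe that if $H$ avoids a set $S'$, meaning $H \cap S' \subseteq \{0\}$, then $H$ automatically avoids every subset of $S'$: for any $S \subseteq S'$ we have $H \cap S \subseteq H \cap S' \subseteq \{0\}$.

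Applying this to $S = -b + L_1 \subseteq S'$, we get that $H$ avoids $-b + L_1$. Proposition~\ref{prop:avoids} then yields directly that $T$ is a strong aggregation matrix for $(F^B)$, completing the proof.

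There is no real obstacle here: the corollary simply repackages Proposition~\ref{prop:avoids} in a form that is more convenient in practice, since in the subsequent sections one is likely to construct a hyperplane avoiding some easily-described superset of $-b+L_1$ (for instance a box or a truncated lattice that contains $-b+L_1$) rather than $-b+L_1$ itself. The only thing worth being careful about is to keep the containment in the right direction: a larger $S'$ makes the avoidance hypothesis \emph{stronger}, not weaker, which is exactly what we want in order to still conclude avoidance of the smaller set $-b+L_1$.
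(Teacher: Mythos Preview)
Your proposal is correct and matches the paper's own treatment: the paper simply records the proof as ``Obvious,'' and the monotonicity of the avoidance relation you spell out is exactly the intended one-line justification via Proposition~\ref{prop:avoids}.
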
 
\begin{proof}
	Obvious.
\end{proof}

In the rest of this section, we prove that we can build in polynomial time a hyperplane $H=\{y\in\mathbb{R}^m\ |\ Ty=0 \}$ that avoids the set $S=-b+L_1$.

It is well known that a basis $B\in \mathbb{Z}^{m \times m}$ of $\mathcal{L}(A)$ can be computed in polynomial time (cf., \cite{schrijver1998}). Hence we can check, in polynomial time,  whether $b$ belongs to $\mathcal{L}(A)$ or not. If $b \notin \mathcal{L}(A)$ then obviously $b \notin L_1$ and  $(F^B)$ is infeasible. Since problem $(F^B)$ is NP-hard that the lattice feasibility problem is polynomial,  we assume, for the sake of clarity that $b \in \mathcal{L}(A)$. 

\noindent Let us define the following matrix: $\tilde{A}=B^{-1}A$. By definition of $A$ and $B$, $\tilde{A}$ is a rational matrix. Furthermore all the lengths of the entries of $\tilde{A}$ are polynomial in the length of the inputs of $(F^B)$. Now let us define $M \in \mathbb{Z}_+$ large enough such that $\|\tilde{A}x\|_{\infty} \leq M $ for all $0 \leq x \leq u$. Again, notice that the length of $M$ is polynomial in the one of the inputs of $(F^B)$. Let us define the following bounded lattice: $L_2=\{By\ |\ \|y\|_{\infty}\leq M \}$.

\begin{lemme}\label{lem:0}
	We have $L_1 \subseteq L_2$.
\end{lemme}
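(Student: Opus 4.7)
The plan is to exhibit, for each element $z \in L_1$, an explicit preimage $y$ under $B$ that satisfies the bound $\|y\|_\infty \le M$, thereby showing $z \in L_2$.

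First I would take an arbitrary $z \in L_1$, so $z = Ax$ for some integer $x$ with $0 \le x \le u$. The natural candidate is $y := \tilde{A} x = B^{-1} A x$, which immediately gives $By = A x = z$, so $z$ belongs to the set $\{By \mid y \in \mathbb{R}^m\}$. The only remaining point is the norm bound, but this is exactly the defining property of $M$: since $0 \le x \le u$, the choice of $M$ yields $\|y\|_\infty = \|\tilde{A} x\|_\infty \le M$. Hence $z \in L_2$, and $L_1 \subseteq L_2$ follows.

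One worthwhile observation, justifying the terminology ``bounded lattice'' for $L_2$, is that $y = \tilde{A} x$ is actually an integer vector. Indeed, $B$ is a basis of $\mathcal{L}(A)$, so each column $A_j$ of $A$ can be written as $A_j = B z_j$ with $z_j \in \mathbb{Z}^m$; therefore $\tilde{A} = B^{-1} A$ has integer columns $z_j$, and $y = \tilde{A} x \in \mathbb{Z}^m$ since $x \in \mathbb{Z}^n$. I would mention this briefly because it explains why the construction is compatible with the lattice viewpoint used in the sequel.

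There is essentially no obstacle in this lemma: it is a definitional unpacking. The only thing to be careful about is that the argument uses $b \in \mathcal{L}(A)$ implicitly only through the fact that $B$ is a basis of $\mathcal{L}(A)$ (which is needed for $\tilde{A}$ to have integer entries and for the length bounds on $M$), and all heavy lifting — the polynomial bound on the size of $M$, the polynomial-time computability of $B$ — has already been done in the paragraph preceding the lemma.
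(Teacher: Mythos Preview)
Your argument is correct and is essentially identical to the paper's proof: take $z=Ax\in L_1$, set $y=\tilde{A}x$, note $By=z$ and $\|y\|_\infty\le M$ by the definition of $M$. Your additional remark that $y\in\mathbb{Z}^m$ (because $B$ is a lattice basis of $\mathcal{L}(A)$, so $\tilde{A}$ has integer columns) is a useful clarification that the paper leaves implicit in its definition of the ``bounded lattice'' $L_2$.
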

\begin{proof}
	Let $Ax \in L_1$. By definition $Ax=B(\tilde{A}x)=By$ with $y=\tilde{A}x$. Since $0\leq x \leq u$ we have that $\|y\|\leq M$, hence $Ax \in L_2$.
\end{proof}

\noindent We now prove that if we can build a hyperplane $H=\{y\in\mathbb{R}^m\ |\ Ty=0 \}$ that avoids the set $-b+L_2$ then the feasibility problem $(F^B)$ is equivalent to its projected version $F^B_T$.

\noindent Let $\beta \in \mathbb{Z}^m$ such that $\beta = B^{-1}b$, we can rewrite 
$$-b+L_2=\{By\ |\ \forall i=1,...,m,\ -M-\beta_i\leq y_i\leq M-\beta_i\}$$ Let us define $C= M +\|\beta\|_{\infty}+1$. Let us consider the following integers $q_1,...,q_{m-1} > C$ that are pairwise coprime, i.e. $\gcd(q_i,q_j)=1$ for all $i\neq j$. We define the following matrix $\Lambda \in \mathbb{Q}^{(m-1)\times m}$: \\
\begin{equation}
\label{eq:lambda}
\Lambda =
\begin{pmatrix}
B_1 + \frac{1}{q_1}B_m \\
B_2 + \frac{1}{q_2}B_m \\
\vdots \\
B_{m-1} + \frac{1}{q_{m-1}}B_m
\end{pmatrix}
\end{equation}
 where $(B_1,...,B_m)=B$. Notice that $\mbox{rank}(\Lambda)=m-1$.

\begin{prop}\label{prop:proj}
	Let $t \in \mbox{Ker}(\Lambda)$. The hyperplane $H=\{y\in \mathbb{R}^m\ |\ t^\top y=0\}$ avoids $-b+L_2$.
\end{prop}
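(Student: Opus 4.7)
The plan is to analyze $\ker(\Lambda)$ via the invertible change of variables $s=B^\top t$, convert membership in $(-b+L_2)\cap H$ into a single linear Diophantine equation in a bounded integer vector $y$, and then apply a CRT-style argument using pairwise coprimality of the $q_i$ together with the lower bound $q_i>C$.

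First, I would describe the kernel explicitly. Since $B$ is invertible, setting $s=B^\top t$ makes $\Lambda t=0$ equivalent to the $m-1$ scalar equations $q_i s_i + s_m = 0$ for $i=1,\dots,m-1$. As $\mathrm{rank}(\Lambda)=m-1$, the kernel is one-dimensional, spanned (after clearing denominators) by the choice $s_m=-P$ and $s_i=P/q_i$, where $P=\prod_{j=1}^{m-1}q_j$. Every $t\in\ker(\Lambda)$ is a scalar multiple of this generator, so it suffices to check the avoidance property for that specific $t$ (for $t=0$ the statement is vacuous as $H$ degenerates).

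Next, I would unpack $z\in -b+L_2$. Writing $b=B\beta$ with $\beta\in\mathbb{Z}^m$, any such $z$ has the form $z=By$ with $y=w-\beta$ for some $w\in\mathbb{Z}^m$ satisfying $\|w\|_\infty\le M$; in particular $y\in\mathbb{Z}^m$ and $|y_i|\le M+\|\beta\|_\infty<C$ for every $i$. The hyperplane condition $t^\top z=0$ becomes $s^\top y=0$, that is
$$\sum_{i=1}^{m-1}\frac{P}{q_i}\,y_i \;=\; P\,y_m.$$

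The crucial step is reducing this identity modulo each $q_j$ with $j\in\{1,\dots,m-1\}$. By pairwise coprimality of the $q_i$'s, $q_j$ divides $P$ and divides $P/q_i$ for every $i\neq j$, while $\gcd(P/q_j,q_j)=1$. Hence the equation collapses to $(P/q_j)y_j\equiv 0\pmod{q_j}$, which forces $y_j\equiv 0\pmod{q_j}$. Combined with $|y_j|<C<q_j$ and $y_j\in\mathbb{Z}$, this yields $y_j=0$ for all $j=1,\dots,m-1$. Substituting back into the displayed equation gives $Py_m=0$, so $y_m=0$ as well, and therefore $z=By=0$.

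The only subtle ingredient is choosing the correct generator of $\ker(\Lambda)$ so that the modular reduction works cleanly; once $s$ is written out, the congruence argument is essentially forced, and the bound $q_i>C$ was tailored precisely to turn a congruence modulo $q_j$ on an integer of absolute value less than $q_j$ into an equality. No deep obstacle is expected beyond the bookkeeping of signs and the verification that the chosen $s$ really does generate the one-dimensional kernel.
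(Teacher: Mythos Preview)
Your proof is correct and essentially the same as the paper's: both arrive at the identity $\sum_{i=1}^{m-1}(P/q_i)\,y_i = P\,y_m$ for an integer vector $y$ with $|y_i|<C$, and then use pairwise coprimality of the $q_i$ together with $q_i>C$ to force $y=0$. The only cosmetic difference is that you reach this identity by writing out an explicit generator of $\ker(\Lambda)$ via the substitution $s=B^\top t$, whereas the paper expands $z$ in the basis $\{B_i+\tfrac{1}{q_i}B_m\}$ of $H$ and matches coefficients against $z=\sum y_iB_i$.
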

\begin{proof}
	Let $z \in H \cap (-b+L_2)$. The $m-1$ independent vectors \\
	$\{B_i+\frac{1}{q_i}B_m\}_{1\leq i\leq m-1}$, form a basis of $H$, hence there exists $(\lambda_1,...,\lambda_{m-1}) \in \mathbb{R}^{m-1}$ such that $z=\sum\limits_{i=1}^{m-1}\lambda_i(B_i+\frac{1}{q_i}B_m)=\sum\limits_{i=1}^{m-1}\lambda_iB_i+(\sum\limits_{i=1}^{m-1}\frac{\lambda_i}{q_i})B_m$. Since $ z\in-b+L_2=\{By\ |\ \forall i=1,...,m,\ -M-\beta_i\leq y_i\leq M-\beta_i\}$, there exists $y \in \mathbb{Z}^{m}$ such that $z=\sum_{i=1}^{m}y_iB_i,\ y\in \mathbb{Z}^{m},\ \|y\|_{\infty} < C$. Since $B$ is invertible, we deduce that for all $i=1,...,m-1$, $y_i=\lambda_i$, and $y_m=\sum\limits_{i=1}^{m-1}\frac{\lambda_i}{q_i}$. Hence $(\prod\limits_{i=1}^{m-1}q_i)y_m =\sum\limits_{i=1}^{m-1}(\prod\limits_{j\neq i}q_j )y_i$. For all $i\in \{1,...,m-1\}$, $q_i$ divides the left and side of the previous equation, therefore $q_i$ must divide $\sum\limits_{k=1}^{m-1}(\prod\limits_{j\neq k}q_j )y_k$. Hence we deduce that $q_i$ divides $(\prod\limits_{j\neq i}q_j )y_i$. Since $q_i$ and $\prod\limits_{j\neq i}q_j$ are coprime, we deduce that $q_i$ divides $y_i$. Since $q_i>C$ and $y_i\le C$, we conclude that $y_i=0$. 
	Hence $z=0$.
\end{proof}

\noindent By the proposition above, we deduce that if we can compute $q_1,...,q_{m-1}$ pairwise coprime then a rational projector $T\in \mathbb{Q}^{1 \times m}$ can be computed  by solving a linear system.\\

\noindent Let $\pi(x)$ be the {\it prime counting function}, i.e. $\pi(x)=|\{p \in \mathbb{N}\ |\ p\leq x,\mbox{ $p$ is prime}\}|$. The Prime Number Theorem asserts (\cite{bach1996}) that $\pi(x)\sim \frac{x}{\log(x)}$ and we can deduce that there exists two constants $c_1,c_2$ such that
$c_1\frac{x}{\log(x)} \leq \pi(x) \leq c_2\frac{x}{\log(x)}$. For all $i \in \mathbb{N}$, let $p_i$ be the $i$-th prime. A consequence of the Prime Number Theorem \cite{bach1996} is that for all $n\ge 6$, we have:
\begin{equation}\label{eq:prime}
p_n\le n(\log(n)+\log(\log(n)))
\end{equation}
We now prove  that we can build a strong aggregation matrix $T$ of size one in polynomial time

\begin{prop}\label{prop:projT}
We can build, in polynomial time, a strong aggregation matrix $T$ of size one for $(F^B)$.
\end{prop}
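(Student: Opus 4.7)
The plan is to realize in polynomial time the construction of Proposition~\ref{prop:proj} and then appeal to Corollary~\ref{cor:avoids}. Concretely, we first compute a basis $B$ of $\mathcal{L}(A)$ via the Hermite Normal Form algorithm and derive $\tilde{A}=B^{-1}A$, $\beta=B^{-1}b$, the bound $M$, and the threshold $C=M+\|\beta\|_\infty+1$. We then produce $m-1$ pairwise coprime integers $q_1,\ldots,q_{m-1}>C$, assemble the matrix $\Lambda$ of Equation~(\ref{eq:lambda}), and extract a nonzero rational vector $t\in\ker(\Lambda)$ by Gaussian elimination. Setting $T:=t^\top$, Proposition~\ref{prop:proj} ensures that the hyperplane $H=\{y\in\mathbb{R}^m\mid Ty=0\}$ avoids $-b+L_2$; since $L_1\subseteq L_2$ by Lemma~\ref{lem:0}, the hyperplane also avoids $-b+L_1$, so Corollary~\ref{cor:avoids} certifies that $T$ is a strong aggregation matrix for $(F^B)$.

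The delicate step is producing the $q_i$ in polynomial time. Although the bit-length of $C$ is polynomial in the input, its value may be exponentially large, so a sieve up to $C$ is not available. We choose $q_i$ to be the $i$-th prime strictly greater than $C$, which makes pairwise coprimality automatic. By Bertrand's postulate the $i$-th such prime is bounded by $2^iC$, so each $q_i$ has bit-length at most $i+\log_2 C$, polynomial in the input; moreover, the Prime Number Theorem, in the explicit form of Equation~(\ref{eq:prime}), shows that a window $(C,\,C+\kappa(m-1)\log C]$ for a suitable constant $\kappa$ already contains at least $m-1$ primes. Since this window has polynomial length in the input, we can sweep through it and apply a deterministic polynomial-time primality test to each candidate, collecting the required $q_1,\ldots,q_{m-1}$.

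Once the $q_i$ are in hand the remaining steps are routine: $\Lambda$ has rational entries of polynomial bit-length, Gaussian elimination over $\mathbb{Q}$ produces $t$ in polynomial time, and clearing denominators yields an integer aggregation matrix $T\in\mathbb{Z}^{1\times m}$ of polynomial size. The real obstacle is the second paragraph, where we must combine the bit-size control given by Bertrand with the density estimate of the Prime Number Theorem, so that both the magnitude of each $q_i$ and the number of integers that have to be tested stay simultaneously polynomial in the input size despite $C$ potentially being exponential in value.
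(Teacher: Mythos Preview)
Your overall strategy is exactly the paper's: the only nontrivial step is manufacturing $m-1$ pairwise coprime integers $q_1,\ldots,q_{m-1}>C$ in time polynomial in the input. Your implementation of that step, however, does not work as stated.

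You claim that Equation~(\ref{eq:prime}) (equivalently the Prime Number Theorem) guarantees at least $m-1$ primes inside a window $(C,\,C+\kappa(m-1)\log C]$. Equation~(\ref{eq:prime}) only bounds the $n$-th prime from the start, $p_n\le n(\log n+\log\log n)$; it says nothing about gaps between primes in the vicinity of an arbitrary (and here potentially exponentially large) threshold~$C$. Likewise the crude two-sided estimate $c_1\,x/\log x\le\pi(x)\le c_2\,x/\log x$ gives no lower bound on $\pi(C+L)-\pi(C)$ for $L$ of order $\log C$: with $c_1<c_2$ the difference need not even be positive unless $L$ is a constant fraction of $C$. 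In fact, deterministically locating a single prime in an interval of length $\operatorname{poly}(\log C)$ above a given $C$ is a well-known open problem, so your sweep-and-primality-test plan cannot be justified with current tools.

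The paper sidesteps this entirely by not looking for primes near $C$. It takes the first $m-1$ primes $p_1,\ldots,p_{m-1}$, all of which are at most $K=(m-1)(\log(m-1)+\log\log(m-1))$ by Equation~(\ref{eq:prime}) and can therefore be listed by sieving up to~$K$ in time polynomial in~$m$. Then for each $i$ it sets $q_i=p_i^{\alpha_i}$ with $\alpha_i$ the least exponent making $p_i^{\alpha_i}>C$. These $q_i$ are automatically pairwise coprime, satisfy $q_i\le\max(K,C^2)$, and hence have polynomial bit-length. Replacing your second paragraph with this prime-power construction makes the proof go through; the rest of your argument (HNF, assembling $\Lambda$, Gaussian elimination to get $t$) is fine.
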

\begin{proof}
	By Proposition \ref{prop:proj}, all that remains to do is to prove that we can generate numbers, $q_1,...,q_{m-1} > C$ that are all pairwise coprime, in polynomial time.\\
	Let $K=(m-1)(\log(m-1)+\log(\log(m-1)))$, by (\ref{eq:prime}), we have $p_{m-1}\le K$. Using the sieve of Eratosthenes, we can generate all the prime numbers belonging to $\{2,...,K\}$  in $O(K\log(\log(K)))$. By definition of $K$, there exists at least $m-1$ of them: $p_1,...,p_{m-1}$. Now, for all $i=1,...,m-1$ let $\alpha_i$ be the smallest integer such that $p_i^{\alpha_i} >C$, Let $q_i=p_i^{\alpha_i}$. By definition, for all $i\neq j $, $q_i$ and $q_j$ are coprime. Furthermore, by definition of $p_i$ and $\alpha_i$, we have that $q_i\leq \max(K,C^2)$, therefore the size of $q_i$ is polynomial in the size of the input of the problem.
\end{proof}
Notice that by Corollary~\ref{cor:avoids}, it is enough to look for a hyperplane $H=\{y\in \mathbb{R}^m\ |\ Ty=0\}$, that avoids a set $S'$ containing $S=-b+L_1$. Hence we can consider a simpler lattice $L'$ that contains $\mathcal{L}(A)$ and that has a basis, $B'$, that is known. For example, let us consider the lattice, $\mathbb{Z}^m$, of integer vectors in $\mathbb{R}^m$. Let $M' \in \mathbb{N}$ be such that $\|Ax\|_\infty \le M'$ for all $x$ in $L_1$, and let $L_3=\{y \in \mathbb{Z}^m\, |\, \|y\|_\infty \le M' \}$.
We have $-b +L_1 \subset -b+L_3$.

\begin{cor}\label{cor:explicitT}
  Let $q_1,...,q_{m-1}$ be  pairwise coprime integers all greater than $M'+\|b\|_\infty$. Then, 
  $$T=\begin{pmatrix}
  \frac{1}{q_1} & \frac{1}{q_2} & \cdots & \frac{1}{q_{m-1}} & -1
  \end{pmatrix}$$
  is a strong aggregation matrix for $(F^B)$.
\end{cor}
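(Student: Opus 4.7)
The plan is to apply Corollary~\ref{cor:avoids} with the auxiliary superset $S' = -b + L_3$, which contains $-b+L_1$ by the very choice of $M'$. Thus it suffices to show that the hyperplane $H = \{y \in \mathbb{R}^m \mid Ty = 0\}$ avoids $-b + L_3$, i.e. that $H \cap (-b+L_3) \subseteq \{0\}$. The corollary then immediately yields that $T$ is a strong aggregation matrix for $(F^B)$.

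The core argument parallels the proof of Proposition~\ref{prop:proj}, but is streamlined because here the underlying lattice is $\mathbb{Z}^m$ rather than a general $B\mathbb{Z}^m$, so the basis is canonical. Take $z \in H \cap (-b+L_3)$. Writing $z = y - b$ with $y \in \mathbb{Z}^m$ and $\|y\|_\infty \le M'$, we get $z \in \mathbb{Z}^m$ with $\|z\|_\infty \le M' + \|b\|_\infty$. The defining equation $Tz = 0$ reads
\begin{equation*}
\sum_{i=1}^{m-1} \frac{z_i}{q_i} = z_m.
\end{equation*}
Multiplying through by $Q = \prod_{i=1}^{m-1} q_i$ produces the integer identity $Q\, z_m = \sum_{i=1}^{m-1} (Q/q_i)\, z_i$.

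Now fix $i \in \{1,\dots,m-1\}$. Then $q_i$ divides $Q z_m$ and, for every $k \neq i$, $q_i$ divides $Q/q_k$ (hence the corresponding term on the right-hand side). It follows that $q_i \mid (Q/q_i)\, z_i$, and by pairwise coprimality $\gcd(q_i, Q/q_i) = 1$, so $q_i \mid z_i$. Combined with the bound $|z_i| \le M' + \|b\|_\infty < q_i$, this forces $z_i = 0$ for every $i \le m-1$. Substituting back into $Tz = 0$ gives $z_m = 0$ as well, hence $z = 0$. This shows $H$ avoids $-b+L_3$ and thus $-b+L_1$, so Corollary~\ref{cor:avoids} finishes the proof.

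The only real subtlety is the strict inequality $|z_i| < q_i$, which requires the triangle-inequality bound $\|y-b\|_\infty \le \|y\|_\infty + \|b\|_\infty \le M' + \|b\|_\infty$ together with the hypothesis $q_i > M' + \|b\|_\infty$; everything else is a direct divisibility argument on an integer identity, identical in spirit to what was done for Proposition~\ref{prop:proj}. Consequently there is no genuine obstacle — the corollary is essentially the special case of the earlier construction where one replaces the basis $B$ of $\mathcal{L}(A)$ by the canonical basis of $\mathbb{Z}^m$, which both simplifies the linear algebra (no basis computation is needed) and yields the explicit closed form for $T$ displayed in the statement.
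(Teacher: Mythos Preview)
Your proof is correct and follows essentially the same route as the paper: the paper simply invokes Proposition~\ref{prop:proj} with $B$ taken to be the identity matrix and checks that the displayed $T^\top$ lies in $\ker(\Lambda)$, whereas you unpack this reference and rerun the divisibility argument of Proposition~\ref{prop:proj} directly in the special case $B=I$. The substance is identical; your version is just more self-contained.
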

\begin{proof}
	By proposition~\ref{prop:proj}, we just verify that $T^\top$ belongs to the null space of $\Lambda$ (c.f. equation~\ref{eq:lambda}), where $B$ is the identity matrix.
\end{proof}

\section{Strong aggregation for the general case}
\label{sec:2}
In this section we study, in the general case, the minimum size of a strong aggregation matrix for $(F)$. Let $\mathcal{C}(A)=\{Ax\, |\, x \ge 0\}$ be the polyhedral cone generated by $A$. We define $R=\mathcal{C}(A) \cap (- \mathcal{C}(A))$ the lineality subspace of $\mathcal{C}(A)$. We will prove that the minimum size of a strong aggregation matrix is equal to the dimension of the lineality subspace plus one. Furthermore, we will show that such a matrix can be constructed in polynomial time. This generalizes the existence result obtained in \cite{rosenberg1974}. We first consider the case where $\mathcal{C}(A)$ is pointed, i.e. $R=\{0\}$. Let us recall that $A$ is always assumed to be a full rank matrix.

\subsection{The pointed case}

We prove here that when the cone $\mathcal{C}(A)$ is pointed we can construct, in polynomial time a strong aggregation matrix for $(F)$. This generalizes the result obtained in \cite{kannan1983polynomial}, limited to the case where all the entries of $A$ are non-negatives. The idea is to first construct a hyperplane $H$ that will intersect the affine cone $-b +\mathcal{C}(A)$ in a bounded region. Then, following the ideas of the previous section, we will prove than we can ``perturb'' $H$ into a hyperplane that avoids $-b +\mathcal{L}^+(A)$. We first prove the following Lemma:

\begin{lemme}\label{lem:pointed}
	Assume that the cone $\mathcal{C}(A)$ is pointed, then there exists $h \in \mathbb{Z}^m$ such that $h^\top A \ge \|A\|_\infty + 1$. Furthermore, such $h$ can be computed in polynomial time.
\end{lemme}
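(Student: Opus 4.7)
The plan is to translate pointedness of $\mathcal{C}(A)$ into the existence of a rational vector $w$ that is strictly positive on every column of $A$, obtain such a $w$ via linear programming, and then scale it up and round it to integers to get $h$.

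First, I would establish the duality equivalence that $\mathcal{C}(A)$ is pointed if and only if there exists $w \in \mathbb{R}^m$ such that $w^\top A_j \geq 1$ for every column $A_j$ of $A$. The forward direction is standard convex analysis: a pointed closed convex cone has a dual cone with nonempty interior, so some $w$ satisfies $w^\top A_j > 0$ strictly for all $j$, and dividing by $\min_j w^\top A_j$ yields the normalized form. For the converse, if $y \in \mathcal{C}(A) \cap (-\mathcal{C}(A))$ is written as $y = Ax = -Ax'$ with $x,x' \geq 0$, then $0 = w^\top A(x+x') \geq \mathbf{1}^\top(x+x')$, forcing $x = x' = 0$, hence $y = 0$.

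Second, a rational vector $w^* \in \mathbb{Q}^m$ satisfying $A^\top w^* \geq \mathbf{1}$ can be produced in polynomial time by solving the corresponding linear feasibility problem with, e.g., Khachiyan's algorithm; standard encoding-length bounds on basic feasible solutions guarantee that $w^*$ has bit-length polynomial in the input of $A$.

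Finally, I would scale and round. Fixing an integer $K \geq \|A\|_\infty + 1 + \tfrac{1}{2}\, m\, \|A\|_\infty$ (which is polynomial in the input), define $h \in \mathbb{Z}^m$ componentwise by $h_i = \lfloor K w^*_i + 1/2 \rfloor$, so that $|h_i - K w^*_i| \leq 1/2$. For every column $A_j$,
\[
h^\top A_j \;=\; K\,(w^*)^\top A_j \,+\, \sum_{i=1}^m (h_i - K w^*_i)\,A_{ij} \;\geq\; K - \tfrac{1}{2}\sum_{i=1}^m |A_{ij}| \;\geq\; \|A\|_\infty + 1,
\]
using $(w^*)^\top A_j \geq 1$ and the entrywise bound $|A_{ij}| \leq \|A\|_\infty$. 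This produces the required $h$ in polynomial time. The only conceptually nontrivial ingredient is the duality equivalence in the first step; the LP solve and the rounding are routine, and the main thing to monitor is that $K$ remains polynomially bounded so that $h$ itself has polynomial encoding length, which the explicit choice above clearly satisfies.
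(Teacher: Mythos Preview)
Your proof is correct and follows essentially the same approach as the paper: both argue that pointedness of $\mathcal{C}(A)$ yields a rational vector strictly positive on all columns of $A$ (the paper via the separating hyperplane theorem applied to $0$ and $\mathrm{conv}(A)$, you via the dual-cone characterization), obtain such a vector by linear programming, and then scale to achieve the threshold $\|A\|_\infty+1$ and integrality. The only cosmetic difference is the integralization step---the paper simply clears denominators of the rational LP solution, whereas you scale by a sufficiently large $K$ and round; both are routine and yield an $h$ of polynomial encoding length.
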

\begin{proof}
	Let us consider the convex hull, $\mbox{conv}(A)=\{Ax\, |\, \sum\limits_{i=1}^nx_i =1,\, x\ge 0 \}$, of $A$. Since $\mathcal{C}(A)$ is pointed, $0 \notin \mbox{conv}(A)$, otherwise there would exist at least one column $A_i$ of $A$ that belongs to $R$. Hence, by the hyperplane separation theorem, there exists $d_1 < d_2 \in \mathbb{R}$ and $h \in \mathbb{R}^m$ such that $h^\top 0 \le d_1$ and $h^\top Ax \ge d_2$, for all $x \ge 0$. Therefore, $0\le d_1 < d_2$ and $h^\top A \ge d_2$. We conclude by scaling $h$ by some positive number in order that $h^\top A \ge \|A\|_\infty +1$. Notice that $h$ can be computed by solving a linear feasibility problem. Hence we can find $h \in \mathbb{Q}^m$, of polynomial size, in polynomial time. We conclude by scaling $h$ by a positive integer such that $h \in \mathbb{Z}$.
\end{proof}
By Lemma~\ref{lem:pointed}, let $h \in \mathbb{Z}^m$ such that $h^\top A \ge \|A\|_\infty +1$. We consider the hyperplane $H=\{y\in \mathbb{R}^m\, |\, h^\top y=0\}$. It it not difficult to see that the intersection $H\cap (-b + \mathcal{C}(A))$ is bounded (See Figure~\ref{fig:1}). In order to adapt the ideas developed in Section~\ref{sec:1}, we will now prove that we can perturb $h$ into $\tilde{h}=h+\eta$, such that the hyperplane $\tilde{H}=\{y\in \mathbb{R}^m\, |\, \tilde{h}^\top y=0\}$ intersects the affine cone $-b+\mathcal{C}(A)$ in a bounded region that contains no non-zero integer point, and hence avoids the set $-b +\mathcal{L}^+(A)$. In the Lemma bellow, we first prove that for any "small'' perturbation of $h$, the infinity norm of any element in  $\tilde{H}\cap (-b+\mathcal{C}(A))$ is bounded by a constant $M$.

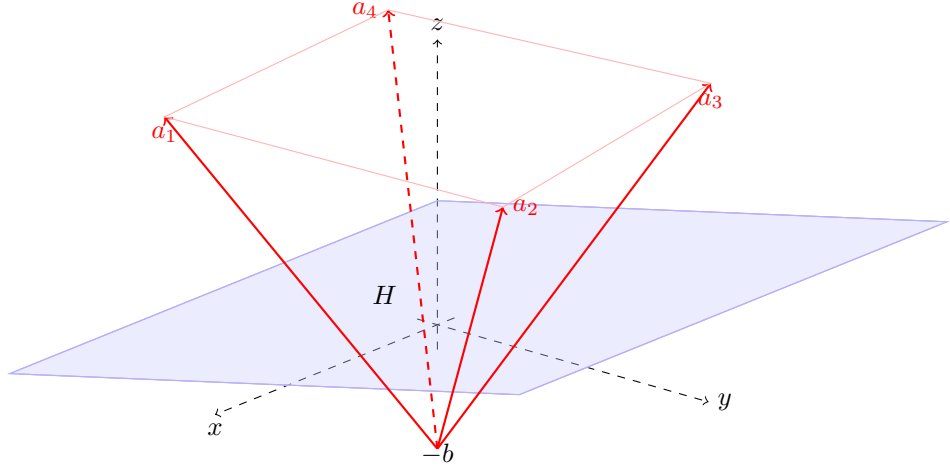
\begin{figure}
	\centering
	\begin{tikzpicture}[scale=3.5,tdplot_main_coords]
	\draw[dashed,->] (-0.1,0,0) -- (1.3,0,0) node[anchor=north]{$x$};
	\draw[dashed,->] (0,-0.1,0) -- (0,1.33,0) node[anchor=west]{$y$};
	\draw[dashed,->] (0,0,-0.1) -- (0,0,1.15) node[anchor=south]{$z$};
	\draw (0,0,-0.6) node[anchor=south] {$-b$};
	\node (p1) at (0,0,0.5){};
	\node (p2) at (2.5,0,0.5){};
	\node (p3) at (2.5,2.5,1){};
	\node (p4) at (0,2.5,1){};
	\draw[red!30,thin,fill=red!20] (p1) -- (p2) -- (p3) -- (p4) -- (p1);
	\draw[name path=plane 1,blue!30,thin,fill=blue!20,fill opacity=0.2] (0,0,0.5) -- (2.5,0,0.5) -- (2.5,2.5,1) -- (0,2.5,1) -- cycle;
	\filldraw[name path=plane 1,blue!30,thin,fill=blue!20,fill opacity=0.2] (0,0,0.5) -- (2.5,0,0.5) -- (2.5,2.5,1) -- (0,2.5,1) -- cycle;
	\draw (1.5,1,0.7) node[anchor=south] {$H$};
	
	\draw[name path=line 1,red,thick,->] (0.0,0,-0.5) -- (1,-0.5,1) node[anchor=north]{$a_1$};
	\draw[name path=line 2,red,thick,->] (0.0,0,-0.5) -- (2,2,1.5) node[anchor=west]{$a_2$};
	\draw[name path=line 3,red,thick,->] (0.0,0,-0.5) -- (1.5,2.6,2) node[anchor=north]{$a_3$};
	\draw[name path=line 4,red,dashed,thick,->] (0.0,0,-0.5) -- (-1.5,-1.5,0.5) node[anchor=east]{$a_4$};
	\draw[red!30,thin] (1,-0.5,1) -- (2,2,1.5) -- (1.5,2.6,2)  -- (-1.5,-1.5,0.5) -- cycle;
	
	\path [name intersections={of=plane 1 and line 1,by=A}];
	\path [name intersections={of=plane 1 and line 2,by=B}];
	\path [name intersections={of=plane 1 and line 3,by=C}];
	\path [name intersections={of=plane 1 and line 4,by=D}];

	\end{tikzpicture}
	\caption{Figure to illustrate Lemma~\ref{lem:pointed}} \label{fig:1}	
\end{figure}

\begin{lemme}\label{lem:boundedintersection}
	Let $h \in \mathbb{Z}^m$ such that $h^\top A \ge \|A\|_\infty +1$, we can compute $M>0$ such that, for all $\eta \in \mathbb{R}^m$, such that $\|\eta\|_\infty \le 1$, we have that  $$(-b+\mathcal{C}(A))\cap \{y\in \mathbb{R}^m\, |\, (h+\eta)^\top y=0\}$$
	 is bounded and any $y$ in the intersection verifies $\|y\|_\infty \le M$
\end{lemme}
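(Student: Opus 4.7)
The plan is to exploit the hypothesis $h^\top A \ge \|A\|_\infty + 1$ (understood as a componentwise inequality on the columns of $A$) to show that $h$ lies in the interior of the dual cone $\mathcal{C}(A)^* = \{u \in \mathbb{R}^m : u^\top A_j \ge 0 \text{ for all } j\}$ with a margin large enough to absorb any perturbation $\eta$ of infinity norm at most one. Concretely, my first step is to verify that for every column $A_j$ of $A$ and every $\eta$ with $\|\eta\|_\infty \le 1$,
$$(h+\eta)^\top A_j \;\ge\; 1.$$
This reduces to the estimate $|\eta^\top A_j| \le \|\eta\|_\infty \sum_{i=1}^m |A_{ij}|$; with a little bookkeeping (or, if needed, a harmless rescaling of the $h$ produced by Lemma~\ref{lem:pointed}), the right-hand side can be bounded by $\|A\|_\infty$, so that $(h+\eta)^\top A_j \ge h^\top A_j - \|A\|_\infty \ge 1$.

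Given this uniform positivity, I pick an arbitrary point $y$ in the intersection and write $y = Ax - b$ with $x \in \mathbb{R}^n_+$. Substituting into $(h+\eta)^\top y = 0$ and rearranging gives
$$\sum_{j=1}^n \bigl((h+\eta)^\top A_j\bigr)\, x_j \;=\; (h+\eta)^\top b.$$
Since each factor on the left is at least $1$ and each $x_j \ge 0$,
$$\sum_{j=1}^n x_j \;\le\; (h+\eta)^\top b \;\le\; (\|h\|_\infty + 1)\, m\, \|b\|_\infty \;=:\; K,$$
and crucially the constant $K$ depends only on $A$, $b$, $h$ and on the allowed size of $\eta$, not on $\eta$ itself.

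Finally, I turn this $\ell_1$-bound on $x$ into an $\ell_\infty$-bound on $y$ coordinate by coordinate: for every row index $i$,
$$|y_i| \;\le\; \sum_{j=1}^n |A_{ij}|\, x_j + |b_i| \;\le\; \bigl(\max_{i,j} |A_{ij}|\bigr)\, K + \|b\|_\infty,$$
and I set $M$ equal to this last expression. I expect the only non-routine step to be the first one, namely securing a uniform positive lower bound on $(h+\eta)^\top A_j$ given the paper's row-sum convention for $\|A\|_\infty$; once that slack is in hand, everything else is a direct computation, and $M$ is manifestly of polynomial size in the input data.
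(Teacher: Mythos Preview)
Your proof is correct and follows essentially the same three-step route as the paper: first establish the uniform lower bound $(h+\eta)^\top A_j \ge 1$, then use it to bound $x$ via $(h+\eta)^\top Ax = (h+\eta)^\top b$, and finally convert this into an $\ell_\infty$-bound on $y = Ax - b$. The only cosmetic differences are that the paper bounds $\|x\|_\infty$ directly (rather than $\|x\|_1$) and applies H\"older to get $(h+\eta)^\top b \le (\|h\|_\infty+1)\|b\|_1$, yielding $M = \|A\|_\infty(\|h\|_\infty+1)\|b\|_1 + \|b\|_\infty$; your explicit flagging of the row-sum versus column-sum issue in step one is in fact more careful than the paper's own ``by definition of $\|A\|_\infty$'' remark.
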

\begin{proof}
	Notice first that by definition of $\|A\|_\infty$, we have that for all  $\|\eta\|_\infty \le 1$, $(h+\eta)^\top A \ge 1$. Hence, if $y \in (-b+\mathcal{C}(A))\cap \{y\in \mathbb{R}^m\, |\, (h+\eta)^\top y=0\}$, it implies that there exists $x \in \mathbb{Z}^n_+$ such that $(h+\eta)^\top Ax= (h+\eta)^\top b $. Hence, since $(h+\eta)^\top A \ge 1$, we have that $\|x\|_\infty \le (h+\eta)^\top b \le (\|h\|_\infty +1 )\|b\|_1$ (the last inequality is a consequence of Hölder inequality). Hence we can chose $M=\|A\|_\infty(\|h\|_\infty +1 )\|b\|_1 + \|b\|_\infty$, as $y=Ax-b$.
\end{proof}

 Let $q_1,...,q_{m}$ be  pairwise coprime integers all strictly greater than $M$, we now show how to compute a strong aggregation matrix for $(F)$.
 
 \begin{prop}\label{prop:pointedaggregate}
Let $h \in \mathbb{Z}^m$ such that $h^\top A \ge \|A\|_\infty +1$ and let $T=h^\top +\begin{pmatrix}
\frac{1}{q_1} & \frac{1}{q_2} & \cdots & \frac{1}{q_{m}}
\end{pmatrix}$. The hyperplane $\tilde{H}=\{y\in \mathbb{R}^m\, |\, Ty=0\}$ avoids $-b + \mathcal{L}^+(A)$.
 \end{prop}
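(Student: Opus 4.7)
The plan is to adapt the coprime-moduli divisibility argument of Proposition~\ref{prop:proj} and combine it with the boundedness guarantee of Lemma~\ref{lem:boundedintersection}. I would start by taking an arbitrary $y\in\tilde{H}\cap(-b+\mathcal{L}^+(A))$, so that $y=Ax-b$ for some $x\in\mathbb{Z}^n_+$ and in particular $y\in\mathbb{Z}^m$. Since $\mathcal{L}^+(A)\subseteq\mathcal{C}(A)$, setting $\eta=(1/q_1,\dots,1/q_m)^\top$ gives $T=(h+\eta)^\top$ with $\|\eta\|_\infty\le 1$ (because each $q_i>M\ge 1$), and Lemma~\ref{lem:boundedintersection} immediately yields $\|y\|_\infty\le M$.

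Next I would expand the defining equation $Ty=0$ as
\[
h^\top y \;+\; \sum_{i=1}^{m}\frac{y_i}{q_i} \;=\; 0.
\]
Since $h,y\in\mathbb{Z}^m$, the term $h^\top y$ is an integer, so $\sum_i y_i/q_i$ is also an integer. Clearing denominators with $Q=\prod_i q_i$ gives
\[
Q\,h^\top y \;+\; \sum_{i=1}^{m} y_i\prod_{j\neq i}q_j \;=\; 0.
\]
Reducing modulo $q_k$ for an arbitrary $k$, every summand contains $q_k$ as a factor except possibly $y_k\prod_{j\neq k}q_j$; hence $q_k$ divides $y_k\prod_{j\neq k}q_j$. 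By the pairwise coprimality of the $q_i$'s, $q_k$ is coprime to $\prod_{j\neq k}q_j$, so $q_k\mid y_k$. Combined with $|y_k|\le M<q_k$, this forces $y_k=0$. Letting $k$ range over $\{1,\dots,m\}$, we conclude $y=0$.

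The main obstacle—really the only nontrivial point—is controlling the magnitude of $y$ so that the forcing $|y_k|<q_k$ can actually be invoked; this is precisely what Lemma~\ref{lem:boundedintersection} was designed to provide, exploiting the fact that $\tilde H$ is a small perturbation of the hyperplane $H=\{h^\top y=0\}$ which cuts the affine cone $-b+\mathcal{C}(A)$ in a bounded slice thanks to the pointedness assumption. Once that a priori bound is available, the number-theoretic conclusion is a direct transcription of the Chinese-remainder-style argument already used in Section~\ref{sec:1}.
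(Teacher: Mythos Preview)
Your proof is correct and follows essentially the same route as the paper's: take a point in the intersection, use Lemma~\ref{lem:boundedintersection} to bound its coordinates by $M$, observe that $Ty=0$ forces $\sum_i y_i/q_i\in\mathbb{Z}$, and then run the pairwise-coprime divisibility argument to conclude each $y_i=0$. If anything, you are slightly more explicit than the paper in verifying the hypothesis $\|\eta\|_\infty\le 1$ before invoking Lemma~\ref{lem:boundedintersection} and in writing $|y_k|\le M<q_k$ with absolute values.
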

\begin{proof}
	Let $z \in \tilde{H} \cap (-b+\mathcal{L}^+(A))$.
	Let $\kappa = -h^\top z$. Since $h$ and $z$ are both integer vectors, we have that 
	$$\sum\limits_{i=1}^m \frac{z_i}{q_i} =\kappa \in \mathbb{Z}$$
	Hence 	
		
 $$\sum\limits_{i=1}^{m}\left(\prod\limits_{j\neq i}q_j \right)z_i= \left(\prod\limits_{i=1}^mq_i \right)\kappa$$
 
 For all $i\in \{1,...,m\}$, $q_i$ divides the right-hand-side of  equation above, therefore $q_i$ must divide $\sum\limits_{k=1}^{m}(\prod\limits_{j\neq k}q_j )z_k$. Hence we deduce that $q_i$ divides $(\prod\limits_{j\neq i}q_j )z_i$. Since $q_i$ and $\prod\limits_{j\neq i}q_j$ are coprime, we deduce that $q_i$ divides $z_i$. Since $q_i>M$ and $z_i\le M$ (by Lemma~\ref{lem:boundedintersection}), we conclude that $z_i=0$. 	Hence $z=0$, which ends the proof.
\end{proof}

\begin{prop}\label{prop:onerowpointed}
	Assume that the cone $\mathcal{C}(A)$ is pointed, then we can construct, in polynomial time, a strong aggregation matrix $T$ for $(F)$
\end{prop}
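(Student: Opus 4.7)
The plan is simply to assemble the three ingredients already produced in this subsection (Lemma~\ref{lem:pointed}, Lemma~\ref{lem:boundedintersection} and Proposition~\ref{prop:pointedaggregate}) and to check that each of them, together with the coprime integers $q_1,\ldots,q_m$, can be generated in polynomial time with bit-length polynomial in the input of $(F)$.

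First, I would invoke Lemma~\ref{lem:pointed} to compute in polynomial time an integer vector $h\in\mathbb{Z}^m$ satisfying $h^\top A\ge \|A\|_\infty+1$; its entries have size polynomial in the input, since the proof of the lemma only requires solving a linear feasibility problem and rescaling. Next, I would compute the constant $M=\|A\|_\infty(\|h\|_\infty+1)\|b\|_1+\|b\|_\infty$ given by Lemma~\ref{lem:boundedintersection}; since $h$, $A$ and $b$ are of polynomial size, so is $M$.

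Second, I would generate the $m$ pairwise coprime integers $q_1,\ldots,q_m$ all strictly greater than $M$ by reproducing the argument used in the proof of Proposition~\ref{prop:projT}. Namely, by the bound $p_n\le n(\log n+\log\log n)$ on the $n$-th prime, the first $m$ primes $p_1,\ldots,p_m$ can be enumerated using the sieve of Eratosthenes in time polynomial in $m$. For each $i\in\{1,\ldots,m\}$, let $\alpha_i$ be the smallest positive integer such that $q_i:=p_i^{\alpha_i}>M$; then $q_i\le p_i\cdot M$, so each $q_i$ has size polynomial in the input. The $q_i$'s are pairwise coprime because they are powers of distinct primes.

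Finally I would set $T=h^\top+\bigl(\tfrac{1}{q_1},\tfrac{1}{q_2},\cdots,\tfrac{1}{q_m}\bigr)$. All components of $T$ are rationals of polynomial bit-length, and by Proposition~\ref{prop:pointedaggregate} the hyperplane $\tilde H=\{y\in\mathbb{R}^m\mid Ty=0\}$ avoids $-b+\mathcal{L}^+(A)$, so Proposition~\ref{prop:avoids} (applied to $S=-b+\mathcal{L}^+(A)\supseteq -b+L_1$ in this unbounded setting) shows that $T$ is a strong aggregation matrix for $(F)$. The only non-trivial point in the argument is the bit-complexity bookkeeping --- essentially verifying that the bound $M$ coming from the geometric lemma is polynomial, so that replacing the primes $q_i$ of Section~\ref{sec:1} by prime powers above $M$ still keeps $T$ of polynomial encoding length --- but all other steps are direct consequences of results already proved in the subsection.
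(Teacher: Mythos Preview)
Your proof is correct and follows exactly the approach of the paper: the paper's own argument simply states that, given Lemma~\ref{lem:pointed}, Lemma~\ref{lem:boundedintersection} and Proposition~\ref{prop:pointedaggregate}, all that remains is to produce the pairwise coprime $q_1,\ldots,q_m>M$ in polynomial time, and refers back to the prime-power construction of Proposition~\ref{prop:projT} for this. You have merely spelled out the bookkeeping (the explicit value of $M$, the bound $q_i\le p_iM$, and the appeal to the avoidance principle of Proposition~\ref{prop:avoids} adapted to $-b+\mathcal{L}^+(A)$) that the paper leaves implicit.
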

\begin{proof}
	All that is left to do is to prove that we can compute $q_1,...,q_m > M$ in polynomial time. The proof of this is identical to the one of Proposition~\ref{prop:proj}.
\end{proof}

\subsection{The general case}

We assume now that the dimension, $r$, of the lineality subspace, $R$, of $\mathcal{C}(A)$ is greater than one. We first prove that any strong aggregation matrix for $(F)$ has a size of at least $r+1$.

\begin{lemme}\label{lem:LB}
	Let $r$ be the dimension of the lineality subspace, $R$, of $\mathcal{C}(A)$ and let $H=\{y\in \mathbb{R}^m\, |\, Ty=0\}$ be a subspace of dimension $m-k$ (where $T$ is a $k \times m$ rational matrix), with $k \le r$.
	Assume that $(F)$ has a solution $x^*$, then there exists a solution $x_T$ of $(F_T)$ that is not a solution of $(F)$.
\end{lemme}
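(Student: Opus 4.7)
The plan is to upgrade the given solution $x^*$ of $(F)$ into a solution $\tilde x$ of $(F_T)$ that violates $A\tilde x = b$. Concretely, I will search for $\tilde x = x^* + \delta$ with $\delta \in \mathbb{Z}^n_+$ satisfying $TA\delta = 0$ and $A\delta \neq 0$: then automatically $TA\tilde x = Tb$, so $\tilde x$ solves $(F_T)$, while $A\tilde x = b + A\delta \neq b$, so $\tilde x$ does not solve $(F)$.

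The key step is to produce a nonzero rational vector $v \in \mathcal{C}(A)\cap H$. Since $R$ is spanned by those columns of $A$ that lie in $R$, it is a rational subspace of $\mathbb{R}^m$ of dimension $r$; and $H = \ker T$ is a rational subspace of dimension $m-k$. Inclusion-exclusion for dimensions gives
\[ \dim(R\cap H) \ \ge\ r + (m-k) - m \ =\ r - k \ \ge\ 0,\]
so whenever $k < r$ the rational subspace $R\cap H$ contains a nonzero rational vector $v$, and automatically $v \in R \subset \mathcal{C}(A)$. The borderline case $k = r$ requires a detour, since then $R\cap H$ may reduce to $\{0\}$: here I pass to the quotient $\mathbb{R}^m/R$, in which the projection of $H$ has dimension $m-r$, equal to the full dimension of the quotient, so its image meets the relative interior of the pointed full-dimensional cone $\mathcal{C}(A)/R$. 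A rational lift back to $\mathbb{R}^m$ (legitimate because $\mathcal{C}(A)+R=\mathcal{C}(A)$) yields a nonzero rational $v \in \mathcal{C}(A)\cap H$, which is what is needed.

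Once $v$ is in hand, the rational linear system $\{Ay = v,\ y \ge 0\}$ is feasible (since $v\in\mathcal{C}(A)$), so rational LP theory gives $x_+ \in \mathbb{Q}^n_+$ with $Ax_+ = v$. Clearing denominators with a suitable positive integer $N$, the vector $\delta := N x_+ \in \mathbb{Z}^n_+$ satisfies $A\delta = Nv \neq 0$ while $TA\delta = N\,Tv = 0$ because $v \in H$. Setting $\tilde x = x^* + \delta \in \mathbb{Z}^n_+$ then produces the solution of $(F_T)$ witnessed by the lemma.

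The main obstacle is exactly the boundary case $k=r$ described above: the naive dimension count only guarantees $\dim(R\cap H) \ge 0$, so one cannot simply select $v$ inside $R$. The quotient-cone argument circumvents this and is the one technical subtlety; everything else is routine rationality and scaling of an LP certificate.
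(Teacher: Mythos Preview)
Your proof is correct and follows essentially the same strategy as the paper: first produce a nonzero rational $v\in H\cap\mathcal{C}(A)$, then pull it back through $A$ to a nonnegative rational vector, clear denominators, and add to $x^*$. The only cosmetic difference is in the borderline case $k=r$ with $R\cap H=\{0\}$: you phrase the argument via the quotient $\mathbb{R}^m/R$ (the image of $H$ is all of the quotient, hence meets the pointed cone $\mathcal{C}(A)/R$ nontrivially, and $\mathcal{C}(A)+R=\mathcal{C}(A)$ lets you lift back), whereas the paper uses the equivalent direct-sum decomposition $\mathbb{R}^m=H\oplus R$ and projects a point $\hat y\in\mathcal{C}(A)\setminus R$ onto its $H$-component. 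These are the same idea in different clothing.
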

\begin{proof}
	We first prove the following claim:\\
	 
	\noindent {\bf Claim:} There exists a non-zero vector $\tilde{y} \in \mathbb{Q}^m$  that belongs to $\in H\cap \mathcal{C}(A)$.\\
	Since $\dim(H) \ge m-r$, either: $H$ intersects $R$ in a non-zero vector $\tilde{y}$, which can be chosen rational since both $R$ and $H$ are generated by rational vectors; or $H$ and $R$ form a direct sum decomposition of $\mathbb{R}^m$ (i.e., $\mathbb{R}^m= H\oplus R$). Let us then consider $\hat{y}=A\hat{x} \in \mathcal{C}(A)\smallsetminus R$. Since $\mathbb{R}^m= H\oplus R$, there exists $v_1 \in H,\, v_2 \in R$, with $v_1 \neq 0$, such that $A\hat{x}=v_1+v_2$. Hence, since $v_2$ belongs to the lineality subspace of $\mathcal{C}(A)$, $\tilde{y}:=v_1=A\hat{x}-v_2 \in \mathcal{C}(A)$, and $\tilde{y}\neq 0$. Furthermore, for the same reason as above we can assume that $\tilde{y} \in \mathbb{Q}^m$, proving the claim.\\
	
	\noindent Let $\tilde{y} \in H\cap \mathcal{C}(A)$. There exists $\tilde{x} \in\mathbb{Q}^m_+$, such that $\tilde{y}=A\tilde{x}\neq 0$, and $TA\tilde{x}=0$. Since $\tilde{x}\in \mathbb{Q}^m_+$, there exists $\lambda >0$, such that $\lambda \tilde{x} \in \mathbb{Z}^m_+$. Let $x_T=x^*+ \lambda \tilde{x} \in \mathbb{Z}^m_+$, by definition of $\tilde{x}$, $Ax_T= b+\lambda \tilde{y} \neq b$ and $TAx_T=TAx^*+\lambda TA\tilde{x}=Tb$, this concludes the proof.
\end{proof}

Lemma~\ref{lem:LB}, proves that if $(F)$ is feasible, then a strong aggregation matrix, $T$, for $(F)$ has at least a size of $r+1$.

\begin{theo}\label{th:minsize}
	Let $r$ be the dimension of the lineality subspace $R$ of $\mathcal{C}(A)$, we can construct in polynomial time, an aggregation matrix $T$ of size $r+1$ for $(F)$. Furthermore, such an aggregation matrix is of minimum size.
\end{theo}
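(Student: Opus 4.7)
The lower bound is immediate from Lemma~\ref{lem:LB}: for a feasible $(F)$, any strong aggregation matrix $T$ of size $k\le r$ has $\dim\ker T\ge m-r$, and that lemma then produces a solution of $(F_T)$ that is not a solution of $(F)$, a contradiction. The content of the theorem is therefore the polynomial-time construction of an aggregation matrix of size exactly $r+1$. My plan is to reduce to the pointed case already handled by Proposition~\ref{prop:onerowpointed}, by splitting $Ax=b$ into a ``lineality block'' of $r$ equations kept untouched and a ``transversal block'' which is aggregated to a single equation.

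To realize the splitting, I would first compute in polynomial time a rational basis $v_1,\dots,v_r$ of $R=\mathcal{C}(A)\cap(-\mathcal{C}(A))$: membership $v\in R$ is decided by feasibility of the LP $\{Ax=v,\ Ay=-v,\ x,y\ge 0\}$, and iterating over suitable probe directions yields both $r$ and the basis. I would then extend to a basis $v_1,\dots,v_m$ of $\mathbb{Q}^m$, set $W=[v_1\,|\,\cdots\,|\,v_m]^{-1}$, and call its first $r$ rows $U_2\in\mathbb{Q}^{r\times m}$ and its last $m-r$ rows $U_1\in\mathbb{Q}^{(m-r)\times m}$; then $\ker U_1=R$ by construction, and after clearing denominators I may assume $U_1A, U_1b, U_2A, U_2b$ integer, still with polynomial bit-length.

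The key structural step is to observe that the transversal cone $\mathcal{C}(U_1A)=U_1\mathcal{C}(A)$ is pointed. Indeed, if $v=U_1c_1=-U_1c_2$ with $c_1,c_2\in\mathcal{C}(A)$, then $c_1+c_2\in\ker U_1\cap\mathcal{C}(A)=R$, so $-c_1=c_2-(c_1+c_2)\in\mathcal{C}(A)+R\subseteq\mathcal{C}(A)$; combined with $c_1\in\mathcal{C}(A)$, this forces $c_1\in R$ and hence $v=U_1c_1=0$. Since additionally $\mbox{rank}(U_1A)=m-r$, Proposition~\ref{prop:onerowpointed} applies to the system $U_1Ax=U_1b,\ x\in\mathbb{Z}^n_+$ and delivers in polynomial time a row $t\in\mathbb{Q}^{1\times(m-r)}$ that is a strong aggregation matrix for it. I then set
$$T=\begin{pmatrix} tU_1 \\ U_2 \end{pmatrix}\in\mathbb{Q}^{(r+1)\times m},$$
and conclude as follows: for $x\in\mathbb{Z}^n_+$, invertibility of $W$ gives that $Ax=b$ is equivalent to $U_1Ax=U_1b$ together with $U_2Ax=U_2b$, and the strong aggregation property of $t$ replaces the first condition by $tU_1Ax=tU_1b$; hence $Ax=b$ if and only if $TAx=Tb$, and $T$ has size $r+1$, matching the lower bound.

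The principal obstacle I anticipate is the pointedness claim for $\mathcal{C}(U_1A)$, i.e., the classical ``quotient of a cone by its lineality is pointed'' fact, which must be argued explicitly as above. The remaining technical point is polynomial bit-length preservation through basis extraction of $R$, the integer scaling of $U_1,U_2$, and the invocation of Proposition~\ref{prop:onerowpointed}; each of these contributes at most a polynomial blow-up and is routine.
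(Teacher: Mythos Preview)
Your proof is correct and follows a genuinely different strategy from the paper's. You reduce to the pointed case by quotienting out the lineality space: project $Ax=b$ along $R$ via $U_1$ (with $\ker U_1=R$), observe that $\mathcal{C}(U_1A)$ is pointed, invoke Proposition~\ref{prop:onerowpointed} once to aggregate the projected system to a single row $t$, and then adjoin the $r$ rows $U_2$ that pin down the lineality component. The paper instead works entirely inside $\mathbb{R}^m$: it builds $r+1$ auxiliary pointed subcones $\mathcal{C}(\tilde{A}^{\{l\}})$ (by taking a lattice basis $B_1,\dots,B_r$ of $\mathcal{L}(A)\cap R$, setting $B_{r+1}=-\sum_iB_i$, and dropping one $B_l$ at a time from $\tilde A=[B\,|\,A^P]$), obtains for each a separating integer vector $h_l$ via Lemma~\ref{lem:pointed}, stacks $h_1,\dots,h_{r+1}$ into $T$, and perturbs only the first row by the coprime-reciprocal vector to eliminate the remaining integer points in the bounded slice. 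Your route is more modular and conceptually transparent---it uses the pointed case as a black box and makes evident the decomposition $r+1=r\ (\text{lineality})+1\ (\text{pointed quotient})$; the paper's route is more self-contained and yields a more explicit $T$ (all rows but one are integer separating functionals) without a change of coordinates. One minor remark: your description of computing a basis of $R$ by ``iterating over suitable probe directions'' is vague; a cleaner polynomial-time argument is that $R$, being the minimal face of $\mathcal{C}(A)$, is spanned exactly by those columns $A_j$ with $-A_j\in\mathcal{C}(A)$, and each such membership is a single LP feasibility test.
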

\begin{proof}
	The minimality is a consequence of Lemma~\ref{lem:LB}, we now exhibit such a aggregation matrix. The idea is to first construct a subspace, $H$ of dimension $m-r-1$ such that $H\cap (-b + \mathcal{C}(A))$ is bounded. Then, following the same idea as in the proof of Proposition~\ref{prop:pointedaggregate}, we will prove that we can perturb $H$ such that it avoids $-b+\mathcal{L}^+(A)$.\\
 Let $B_1,...,B_r \in \mathbb{Z}^m$ be a basis of the lattice $\mathcal{L}(A)\cap R$. Let $B_{r+1}=-\sum\limits_{i=1}^rB_i$, and define $B \in \mathbb{Z}^{m \times(r+1)}$ be the matrix that has its columns equal to the $B_i$ vectors for, $1\le i\le r+1$. Let $A^R$ (respectively $A^P$) be  the $m \times n_1$ (resp. $m \times n_2$) sub-matrix of $A$ that is made of all the columns $A_i$ that  belong to the lineality subspace $R$ (resp. that does not belong to $R$). Notice that the cone, $\mathcal{C}(A^P)$, generated by $A^P$ is pointed, otherwise the dimension of the lineality subspace of $\mathcal{C}(A)$ would be of at least $r$+1. Furthermore, since $R$ is a face of $\mathcal{C}(A)$, $\mathcal{C}(A^P) \cap R = \{0\}$.\\
	
	\noindent {\bf Claim 1:} For all $x\in \mathbb{Z}^n_+$ such that $Ax=b$, there exists $x^B \in \mathbb{Z}^{r+1}_+$ and $x^P \in \mathbb{Z}^{n_2}_+$ such that $Bx^B + A^Px^P=b$. \\
	 Let $x\in \mathbb{Z}^n_+$ such that $Ax=b$. We write $x=(x^R,x^P)$ and $Ax=A^Rx^R+A^Px^P$. Since $A^Rx^R \in \mathcal{L}(A)\cap R $, there exists $y \in \mathbb{Z}^r$ such that $A^Rx^R= \sum\limits_{i=1}^rB_iy_i$. By definition of $B_{r+1}=-\sum\limits_{i=1}^rB_i$, there exists $x^B \in \mathbb{Z}^{r+1}_+$ such that $\sum\limits_{i=1}^rB_iy_i= (\sum\limits_{i=1}^rB_ix^B_i)+x^B_{r+1}B_{r+1}$, proving the claim.\\
	 
	\noindent Let $\tilde{A}=\begin{pmatrix}
	 B & A^P
	\end{pmatrix}$, and let $\tilde{x}=\begin{pmatrix}
	x^B & x^P
	\end{pmatrix}^\top$. By Claim 1, if $H$ avoids $-b+\mathcal{L}^+(\tilde{A})$, then $H$ avoids $-b + \mathcal{L}^+(A)$. For all $k\in \{1,...,r+1\}$, we define $\tilde{A}^{\{k\}}$, as the matrix containing all the columns of $\tilde{A}$ except the $k$th. We now prove that for all  $k\in \{1,...,r+1\}$, the cone, $\mathcal{C}(\tilde{A}^{\{k\}})$, generated by $\tilde{A}^{\{k\}}$, is pointed.\\
	
	\noindent {\bf Claim 2:}  For all  $l\in \{1,...,r+1\}$, the cone, $\mathcal{C}(\tilde{A}^{\{l\}})$, generated by $\tilde{A}^{\{l\}}$, is pointed.\\
	Assume that there exists $l\in \{1,...,r+1\}$ such that $\tilde{A}^{\{l\}}$ is non-pointed. Hence, there exists a non-zero vector $(x^{\{l\}},x^P)\in \mathbb{R}^{r+n_2}_+$ such that $\sum\limits_{i\neq l}B_ix^{\{l\}}_i+A^Px^P=0$. Since the cone $\mathcal{C}(A^P)$ is pointed, we have $x^{\{l\}}\neq 0$. Furthermore, since $A^Px^P= -\sum\limits_{i\neq l}B_ix^{\{l\}}_i$, we must have that $x^P=0$, otherwise we would have that the pointed part and the non-pointed part of $\mathcal{C}(A)$ intersect in a non-zero element, which is impossible. Hence, $\sum\limits_{i\neq l}B_ix^{\{l\}}_i=0$, which is impossible since the family $(B_i)_{i\neq l}$ is linearly independent. Contradiction, the claim is proved.\\
	
	\noindent By Lemma~\ref{lem:pointed}, for all $l \in \{1,...,r+1\}$, we can compute, in polynomial time, an integer vector, $h_l \in \mathbb{Z}^m$ such that  $h_l^\top \tilde{A}^{\{l\}} \ge \|\tilde{A}^{\{l\}}\|_\infty + 1$. Let $\eta \in \mathbb{R}^m$ be any vector such that $\|\eta\|_\infty \le 1$. We define the following matrix: 
	$$T^\eta=\begin{pmatrix}
	(h_1+\eta)^\top  \\
	h_2^\top \\
	\vdots \\
	h_{r+1}^\top
	\end{pmatrix}= \begin{pmatrix}
	t_1 \\
	\vdots \\
	t_{r+1}
	\end{pmatrix}$$
	Notice that since $\|\eta\|_\infty \le 1$, $(h_1+\eta)^\top \tilde{A}^{\{1\}} \ge  1$. Let $H^\eta=\{y\in \mathbb{R}^m\, |\, T^\eta y=0\}$. We now prove that the intersection $H^\eta\cap (-b +\mathcal{C}(A))$ is bounded.\\
Let $y \in H^\eta \cap (-b +\mathcal{C}(A))$. Since $y \in -b +\mathcal{C}(A)$, we have  $y \in -b +\mathcal{C}(\tilde{A})$ (obviously, $\mathcal{C}(A)=\mathcal{C}(\tilde{A})$), hence there exists $\tilde{x}=(x^B,x^P)$ with $x^B \in \mathbb{R}^{r+1}_+$ and $x^P \in \mathbb{R}^{n_2}_+$ such that $y=-b+\tilde{A}\tilde{x}=-b+Bx^B + A^Px^P$. Furthermore, since $Bx^B=(\sum\limits_{i=1}^rB_ix^B_i)- x^B_{r+1}\sum\limits_{i=1}^rB_i$, we can chose $x^B$ such that at least one of its component, $x^B_l$, is equal to zero. Since $T^\eta y=0$, we have in particular that 
$$(T^\eta y)_l=t_l^\top y = t_l^\top(\tilde{A}\tilde{x}-b) =0$$
Since $x_l^B$, we have $t_l^\top\tilde{A}\tilde{x}=t_l^\top\tilde{A}^{\{l\}}\tilde{x}^{\{l\}}$, where $t_l^\top\tilde{A}^{\{l\}} \ge 1$, by definition of $T$.  Hence,  $t_l^\top\tilde{A}\tilde{x}=t_l^\top b$ implies that the infinity norm of $\tilde{x}$ is bounded by the infinity norm of $b$. This implies that $y$ is bounded and that $H^\eta\cap (-b +\mathcal{C}(A))$ is bounded. Let $M$ be an upper bound on the infinity norm of an element of $H^\eta\cap (-b +\mathcal{C}(A))$ (notice that  such an $M$ with polynomial size can be computed in polynomial time).\\

\noindent Let $q_1,...,q_{m}$ be  pairwise coprime integers all strictly greater than $M$, we fix now   $\eta$ to $\eta^* =\begin{pmatrix}
	\frac{1}{q_1} & \frac{1}{q_2} & \cdots & \frac{1}{q_{m}}
	\end{pmatrix}^\top$. Let $T=T^{\eta^*}$ and $H=H^{\eta^*}$. 	Let $y \in H \cap (-b+\mathcal{L}^+(A))$.
	Let $\kappa = -h_1\top y$. Since $h$ and $z$ are both integer vector, we have that 
	$$(Ty)_1=(h_1+\eta^*)^\top y =0 $$
	hence $$\sum\limits_{i=1}^m \frac{y_i}{q_i} =\kappa \in \mathbb{Z}$$
	hence 	
	$$\sum\limits_{i=1}^{m}\left(\prod\limits_{j\neq i}q_j \right)y_i= \left(\prod\limits_{i=1}^mq_i \right)\kappa$$
	
	For all $i\in \{1,...,m\}$, $q_i$ divides the right-hand-side of  equation above, therefore $q_i$ must divide $\sum\limits_{k=1}^{m}(\prod\limits_{j\neq k}q_j )y_k$. Hence we deduce that $q_i$ divides $(\prod\limits_{j\neq i}q_j )y_i$. Since $q_i$ and $\prod\limits_{j\neq i}q_j$ are coprime, we deduce that $y_i$ divides $y_i$. Since $q_i>M$ and $y_i\le M$, we conclude that $y_i=0$, and that $y=0$. We do not detail furthermore  the fact that $T$ can be computed in polynomial time as the proof is similar to the previous ones.
\end{proof}

\section{Weak aggregation}\label{sec:weakaggreg}

In the previous section, we have seen, in Theorem~\ref{th:minsize}, that the minimum size of a strong aggregation matrix is equal to the dimension of the lineality subspace of $\mathcal{C}(A)$ plus one. Hence, it is impossible, in the general case, to obtain an equivalent aggregated system of one linear equation. In this section, we will see that we can always compute, in polynomial time, a weak aggregated system $(\tilde{F})$ of size one. We recall that if $(\tilde{F})$ is a weak aggregated system for $(F)$ if the feasibility of $(F)$ is equivalent. The advantage of our approach, compared to \cite{kannan1983polynomial}, is that we do not introduced new variables in $\tilde{F}$, however the drawback is that we will introduce upper-bound for some of the variables of $(F)$.\\

Indeed, in \cite{borosh1976}, the authors prove that if $(F)$ is feasible, there exists a solution $x^*$ of $(F)$  that have an infinity norm bounded by a number $M$ that can be computed in polynomial time. Hence we can transform $(F)$ into a BILP, $(F^B)$, and use the framework developed in Section~\ref{sec:1} to compute a strong aggregated system $(F^B_T)$ of $(F^B)$, and hence a weak aggregated system for $(F)$. 

\section{Number of integer points in a polytope}
\label{sec:counting}
In this section, we derive a simple analytic formula to count the number of feasible solutions for $(F)$, in the case where the cone $\mathcal{C}(A)$ is pointed, or when explicit bounds on $x$ are known. In the general case, we will show that this formula can be used to know if $(F)$ is feasible or not. We first study the bounded case, and look for the number of feasible solutions for $(F^B)$:
\begin{equation*}
(F^B) = \left\{
\begin{array}{lll}
& Ax=b\\
& 0\leq x \leq u \\
& x \in \mathbb{Z}^n
\end{array}
\right. 
\end{equation*}
Let $T$ be a strong polynomial aggregation matrix for $(F_B)$, and let us rewrite $(F^B_T)$ as:
\begin{equation*}
(F^B_T) = \left\{
\begin{array}{lll}
& \sum\limits_{i=1}^n\alpha_i x_i =\beta\\
& 0\leq x \leq u \\
& x \in \mathbb{Z}^n
\end{array}
\right. 
\end{equation*}
Since $x$ is bounded, we can assume w.l.o.g. that $\alpha \ge 0$. Furthermore, since $\alpha$ is a rational vector, we also assume w.l.o.g. that $\alpha \in \mathbb{Z}^n$ and $\beta \in \mathbb{Z}$.
Let us consider the following polynomial:
$$P(X)=\prod\limits_{l=1}^n\bigg(\sum\limits_{j=0}^{u_l}X^{j\alpha_l}\bigg) $$ that can be rewritten into
$$P(X) = \sum\limits_{j=0}^\infty \gamma_jX^j$$
where $\gamma_i \in \mathbb{N}$ for all $i$.

\begin{lemme}
	\label{lem:cardinality}
	For all $i \in \mathbb{N}$, $\gamma_i$ is equal to the cardinality of the set $\left\{x \in \mathbb{Z}^n_+\, |\, \sum\limits_{j=1}^n \alpha_jx_j =i,\, x\le u \right\}$.
\end{lemme}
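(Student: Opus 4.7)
The plan is a direct generating function calculation. I would start by expanding the product $P(X) = \prod_{l=1}^n\bigl(\sum_{j=0}^{u_l}X^{j\alpha_l}\bigr)$ by the distributive law. Each term in the expansion arises from selecting one summand $X^{x_l \alpha_l}$ out of the $l$th factor, where the choice is indexed by an integer $x_l \in \{0,1,\ldots,u_l\}$. The product of the selected monomials is
\[
\prod_{l=1}^n X^{x_l \alpha_l} = X^{\sum_{l=1}^n \alpha_l x_l}.
\]
Hence the full expansion can be rewritten as
\[
P(X) = \sum_{\substack{x\in\mathbb{Z}^n_+ \\ x \le u}} X^{\sum_{l=1}^n \alpha_l x_l}.
\]

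Next I would regroup this sum by the common value of the exponent. Setting $i := \sum_{l=1}^n \alpha_l x_l$, each integer $i \ge 0$ contributes one copy of $X^i$ for every tuple $x$ with $0 \le x \le u$ achieving that value. Therefore
\[
P(X) = \sum_{i=0}^{\infty} \Bigl|\bigl\{x \in \mathbb{Z}^n_+ \,\bigm|\, \textstyle\sum_{j=1}^n \alpha_j x_j = i,\ x \le u\bigr\}\Bigr| \, X^i.
\]
Comparing this expression with the definition $P(X) = \sum_{j=0}^\infty \gamma_j X^j$ and using uniqueness of polynomial coefficients yields the claimed identity.

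There is really no obstacle to speak of here: the statement is a standard instance of the generating-function encoding of bounded integer solutions of a linear Diophantine equation, and the entire argument reduces to expanding a product and identifying monomials. The only thing worth being careful about is the finiteness of $P(X)$ as a polynomial (the sum over $i$ has only finitely many nonzero terms because $\alpha \ge 0$ and $x \le u$ bound the exponent), and the fact that we implicitly use $\alpha_l \ge 0$, which was assumed without loss of generality just before the lemma statement.
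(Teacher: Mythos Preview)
Your proposal is correct and follows essentially the same approach as the paper: expand the product by distributivity to obtain $P(X)=\sum_{0\le x\le u}X^{\sum_l \alpha_l x_l}$, then collect terms of equal degree. The paper's proof is just a terser version of what you wrote.
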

\begin{proof}
	Let  us expand the product $$P(X)=(\sum\limits_{j=0}^{u_1}X^{j\alpha_1})...(\sum\limits_{j=0}^{u_n}X^{j\alpha_n})=\sum\limits_{x\le u}X^{\sum\limits_{i=1}^n \alpha_i x_i}$$ 
	We conclude the proof rearranging the terms $X^i$ of the same degree.
\end{proof}
 
 Hence all we have to do, in order to compute the number of solutions of $(F_B)$, is to compute $\gamma_\beta$. Let $f:[-\pi,\pi]\mapsto \mathbb{C}$ be defined by $f(\theta)=P( e^{\imath \theta})$.

\begin{prop}\label{prop:solnumber1}
	The number, $\gamma_\beta$, of solutions of $(F^B)$ is given by the following expression:
	$$\gamma_\beta= \frac{1}{2\pi }\int_{-\pi}^\pi \left(
	\prod\limits_{j=1}^n \frac{1-e^{\imath  \alpha_j (u_j+1)\theta}}{1-e^{\imath  \alpha_j \theta}} \right) e^{-\imath  \beta \theta} d\theta $$
\end{prop}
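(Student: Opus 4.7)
The plan is to combine Lemma~\ref{lem:cardinality} with the standard Fourier-coefficient extraction trick on the unit circle, followed by evaluation of each factor of $P$ as a finite geometric series.

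First, I would recall from Lemma~\ref{lem:cardinality} that the number of solutions of $(F^B)$ equals $\gamma_\beta$, the coefficient of $X^\beta$ in the polynomial $P(X) = \sum_{j \geq 0} \gamma_j X^j$. The goal is therefore to extract this specific coefficient by an integral formula. The key observation is the orthogonality relation
$$\frac{1}{2\pi}\int_{-\pi}^{\pi} e^{\imath k\theta}\, d\theta = \begin{cases} 1 & \text{if } k=0,\\ 0 & \text{if } k \in \mathbb{Z}\setminus\{0\}.\end{cases}$$
Applying this termwise (the sum defining $P$ is finite, so termwise integration is trivially justified) to $f(\theta) = P(e^{\imath\theta})$ multiplied by $e^{-\imath\beta\theta}$, I obtain
$$\frac{1}{2\pi}\int_{-\pi}^{\pi} P(e^{\imath\theta})\, e^{-\imath\beta\theta}\, d\theta = \sum_{j\ge 0} \gamma_j \cdot \frac{1}{2\pi}\int_{-\pi}^{\pi} e^{\imath(j-\beta)\theta}\, d\theta = \gamma_\beta.$$

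Next, I would evaluate $P(e^{\imath\theta})$ by applying the finite geometric series formula to each of its $n$ factors. For each $l\in\{1,\ldots,n\}$ and for $\theta$ such that $\alpha_l\theta \notin 2\pi\mathbb{Z}$, one has
$$\sum_{j=0}^{u_l} e^{\imath j\alpha_l\theta} = \frac{1 - e^{\imath\alpha_l(u_l+1)\theta}}{1 - e^{\imath\alpha_l\theta}}.$$
Substituting this into the definition of $P$ yields the claimed integrand on the complement of a finite (hence measure-zero) set of exceptional points $\theta$, while the original integrand $P(e^{\imath\theta}) e^{-\imath\beta\theta}$ is a bounded continuous function of $\theta$ everywhere. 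Therefore the two expressions agree as integrals, and
$$\gamma_\beta = \frac{1}{2\pi}\int_{-\pi}^{\pi} \left(\prod_{j=1}^n \frac{1-e^{\imath\alpha_j(u_j+1)\theta}}{1-e^{\imath\alpha_j\theta}}\right) e^{-\imath\beta\theta}\, d\theta,$$
which is exactly the announced formula.

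The only genuinely delicate point is the handling of the removable singularities arising in the quotient representation: at those $\theta$ where $1 - e^{\imath\alpha_j\theta}$ vanishes, the factor is meant in the limiting sense $u_j+1$. Since these form a finite set of points in $[-\pi,\pi]$, they do not affect the value of the Lebesgue integral, and the identification with $\gamma_\beta$ remains valid. Everything else is a routine expansion.
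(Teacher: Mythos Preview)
Your proof is correct and follows essentially the same route as the paper: extract the coefficient $\gamma_\beta$ from $P(e^{\imath\theta})$ via the orthogonality of the exponentials on $[-\pi,\pi]$, then rewrite each factor as a finite geometric sum. If anything, your version is slightly more careful in explicitly noting that the quotient form has only removable singularities on a measure-zero set, a point the paper glosses over.
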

\begin{proof}
	By definition,
	$$f(\theta)= \sum\limits_{j=0}^\infty \gamma_j e^{\imath j \theta} $$
	The set of functions $\{\theta \mapsto e^{\imath j \theta}\}_{j \in \mathbb{Z}}$ is an orthonormal basis of the space $L^2([-\pi, \pi ])$ of square integrable functions of $[-\pi,\pi]$, with respect to the scalar product
	$$\left\langle g, h\right\rangle= \frac{1}{2\pi }\int_{-\pi}^{\pi}g(\theta)\bar{h}(\theta)d\theta$$
	hence, since the sum in the expression of $f(\theta)$ is finite, we have:
	\begin{align*}
	& \frac{1}{2\pi }\int_{-\pi}^{\pi}f(\theta)e^{-\imath  \beta \theta} d\theta \\
	 = &\frac{1}{2\pi }\int_{-\pi}^{\pi}\left(\sum\limits_{j=0}^\infty \gamma_j e^{\imath j \theta}\right)e^{-\imath  \beta \theta} d\theta \\
	 = &\frac{1}{2\pi } \sum\limits_{j=0}^\infty \gamma_j \int_{-\pi}^{\pi} e^{\imath (j-\beta) \theta} \\
	 = & \gamma_\beta
	\end{align*}
	However, 
	\begin{align*}
	f(\theta)= & \prod\limits_{l=1}^n\bigg(\sum\limits_{j=0}^{u_l}e^{\imath j \alpha_l  \theta}\bigg) \\
	= & \prod\limits_{l=1}^n \frac{1-e^{\imath  \alpha_l (u_l+1)\theta}}{1-e^{\imath  \alpha_l \theta}}
	\end{align*}
	which ends the proof.
\end{proof}

Let us consider the unbounded case $(F)$ when $\mathcal{C}(A)$ is pointed. Similarly, let
$$\sum\limits_{i=1}^n\alpha_i x_i =\beta$$ be the aggregated equation. We can assume that all the coefficients are integer, furthermore we have seen in Proposition~\ref{prop:pointedaggregate}, that $\alpha > 0$, hence $\alpha \in \mathbb{Z}^n_+$. Let us consider the following formal polynomial:

$$Q(X)=\prod\limits_{l=1}^n\bigg(\sum\limits_{j=0}^{+\infty}X^{j\alpha_l}\bigg)= \sum\limits_{j=0}^\infty \sigma_jX^j $$
where $\sigma_i \in \mathbb{Z}_+$ for all $i$. Similarly, we have that the number of solutions of $(F)$ is given by the coefficient $\sigma_\beta$.

\begin{prop}\label{prop:solnumber2}
	The number, $\sigma_\beta$, of solutions of $(F)$ is given by the following expression:
	$$\sigma_\beta= \frac{2^{\beta-1}}{\pi }\int_{-\pi}^\pi \left(
	\prod\limits_{j=1}^n \frac{2^{\alpha_j}}{2^{\alpha_j}-e^{\imath  \alpha_j \theta}} \right) e^{-\imath  \beta \theta} d\theta $$
\end{prop}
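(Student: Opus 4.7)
The strategy mirrors that of Proposition~\ref{prop:solnumber1}: express $\sigma_\beta$ as a Fourier coefficient of the generating function evaluated on a circle, then invert it via orthogonality of $\{e^{\imath j\theta}\}_{j\in\mathbb{Z}}$ on $L^2([-\pi,\pi])$. The essential new difficulty is that $Q(X)$ is a genuine power series rather than a polynomial, so $Q(e^{\imath\theta})$ diverges: in each geometric series $\sum_{j\ge 0}X^{j\alpha_l}$ the terms no longer go to zero when $|X|=1$. The natural fix is to shrink the evaluation point inside the open unit disk.

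Accordingly, I would evaluate $Q$ at $X=e^{\imath\theta}/2$. Since $|X|=1/2<1$, each factor converges absolutely to $(1-X^{\alpha_l})^{-1}$, so
\begin{equation*}
g(\theta):=Q\!\left(\tfrac{e^{\imath\theta}}{2}\right)=\prod_{l=1}^{n}\frac{1}{1-e^{\imath\alpha_l\theta}/2^{\alpha_l}}=\prod_{l=1}^{n}\frac{2^{\alpha_l}}{2^{\alpha_l}-e^{\imath\alpha_l\theta}}.
\end{equation*}
On the other hand, the formal expansion of $Q$ yields $g(\theta)=\sum_{j=0}^{\infty}\sigma_j\,2^{-j}e^{\imath j\theta}$; the agreement between this series and the closed-form product follows from the Cauchy product theorem applied to absolutely convergent series. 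Integrating against $e^{-\imath\beta\theta}$ and invoking the orthogonality relation $\frac{1}{2\pi}\int_{-\pi}^{\pi}e^{\imath(j-\beta)\theta}\,d\theta=\delta_{j,\beta}$, I would obtain $\sigma_\beta\,2^{-\beta}=\frac{1}{2\pi}\int_{-\pi}^{\pi}g(\theta)e^{-\imath\beta\theta}\,d\theta$, which is the desired formula after multiplying through by $2^\beta$.

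The only non-trivial step is the interchange of the infinite sum and the integral. I would justify it by a crude a priori bound on $\sigma_j$: since each $\alpha_i\ge 1$, any solution of $\sum_i\alpha_ix_i=j$ with $x\ge 0$ satisfies $x_i\le j$, so $\sigma_j\le (j+1)^n$. Hence $\sum_{j\ge 0}\sigma_j\,2^{-j}<\infty$, the partial sums of $\sum_j\sigma_j\,2^{-j}e^{\imath j\theta}$ converge uniformly in $\theta$ to $g(\theta)$, and term-by-term integration is legitimate. This polynomial growth estimate is the main ingredient one must supply beyond the formal manipulations inherited from Proposition~\ref{prop:solnumber1}; every other step is a direct transcription of the bounded case, with the damping factor $2^{-j}$ doing the work previously done by the upper bound $u$.
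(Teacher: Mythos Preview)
Your proof is correct and follows essentially the same approach as the paper: evaluate $Q$ at $X=e^{\imath\theta}/2$, identify the product and series forms of $g(\theta)$, and extract $\sigma_\beta/2^\beta$ via orthogonality of the exponentials. Your justification of the sum--integral interchange via the polynomial bound $\sigma_j\le(j+1)^n$ is in fact more careful than the paper's, which asserts (somewhat imprecisely) that the sum is finite and then invokes the argument of Proposition~\ref{prop:solnumber1}.
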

\begin{proof}
	For all $|X| < 1$, $Q(X)$ converge and we have 
	$$Q(X)=\prod\limits_{l=1}^n \frac{1}{1-X^{\alpha_l}} $$
	Let $g:[-\pi,\pi]\mapsto \mathbb{C}$ be defined by $g(\theta)=Q( \frac{1}{2}e^{2\imath \pi \theta})$. We have
	$$g(\theta)= \prod\limits_{l=1}^n \frac{2^{\alpha_j}}{2^{\alpha_j}-e^{\imath  \alpha_j \theta}}$$
	We also have
	\begin{align*}
	g(\theta) &=  \sum\limits_{j=0}^\infty \sigma_j\left(\frac{1}{2}e^{\imath  \theta}\right)^j \\
	& = \sum\limits_{j=0}^\infty \frac{\sigma_j}{2^j}e^{\imath j \theta}
	\end{align*}
	Notice that since $\alpha \ge 0$, the above sum is finite. Hence by the same argument as in the proof of Proposition~\ref{prop:solnumber1}, we have that
	$$\frac{\sigma_\beta}{2^\beta}=\frac{1}{2\pi}\int_{-\pi}^{\pi}\left(\sum\limits_{j=0}^\infty \frac{\sigma_j}{2^j}e^{\imath j \theta}\right)e^{-\imath \beta \theta} d\theta $$
	which ends the proof.
\end{proof}

When no strong aggregation matrix of size one exists, it is no more possible to count the number of integer points in a polyhedron with this method. Indeed, in the non-pointed case, it is easy to see that if an integer solution point exists, then there is an infinity of integer solutions. However, by bounding artificially the solution space, as explained in the Section~\ref{sec:weakaggreg}, we can still know, using the formula of Proposition~\ref{prop:solnumber1}, if the problem is feasible or not.

\section{Bibliography}

\bibliographystyle{plain}
\bibliography{biblio}

\end{document}